\theoremstyle{plain}
\newtheorem*{theorem*}{Theorem}
\newtheorem{theorem}{Theorem}
\newtheorem{lemma}[theorem]{Lemma}
\newtheorem*{claim*}{Claim}
\newtheorem{conjecture}[theorem]{Conjecture}
\theoremstyle{definition}
\theoremstyle{remark}
\def\PP{\mathcal{P}}
\def\A{\mathcal{A}}
\let\emptyset\varnothing
\title{Union vertex-distinguishing edge colorings}
\author{Teeradej Kittipassorn\thanks{\,Department of Mathematics and Computer Science, Faculty of Science, Chulalongkorn University, Bangkok 10330, Thailand; \texttt{teeradej.k@chula.ac.th}.}
  \and Preechaya Sanyatit\thanks{\,Department of Mathematics, Faculty of Science, Silpakorn University, Nakhon Pathom 73000 Thailand; \texttt{sanyatit\_p@silpakorn.edu}; \emph{Corresponding author}.}}
\begin{document}
\maketitle

%%%%%%%%%%%%%%%%%%%%
\begin{abstract}
The \emph{union vertex-distinguishing chromatic index} $\chi'_\cup(G)$ of a graph $G$ is the smallest natural number $k$ such that the edges of $G$ can be assigned nonempty subsets of $[k]$ so that the union of the subsets assigned to the edges incident to each vertex is different.
We prove that $\chi'_\cup(G) \in \left\{ \left\lceil \log_2\left(n +1\right) \right\rceil, \left\lceil \log_2\left(n +1\right) \right\rceil+1 \right\}$ for a graph $G$ on $n$ vertices without a component of order at most two.
% Given a graph $G$ on $n$ vertices without a component of order at most two, the \emph{union vertex-distinguishing chromatic index} $\chi'_\cup(G)$ is the smallest natural number $k$ such that the edges of $G$ can be assigned nonempty subsets of $[k]$ so that the union of the subsets assigned to the edges incident to each vertex is different.
% We prove that $\chi'_\cup(G) \in \left\{ \left\lceil \log_2\left(n +1\right) \right\rceil, \left\lceil \log_2\left(n +1\right) \right\rceil+1 \right\}$.
This answers a question posed by Bousquet, Dailly, Duch\^{e}ne, Kheddouci and Parreau, and independently by Chartrand, Hallas and Zhang.
% We prove that the edges of every graph on $n$ vertices without a component of order at most two can be assigned nonempty subsets of $\left[\left\lceil \log_2\left(n +1\right) \right\rceil +1\right]$ so that the union of the subsets assigned to the edges incident to each vertex is different.
% This answers a question posed by Bousquet, Dailly, Duch\^{e}ne, Kheddouci and Parreau, and independently by Chartrand, Hallas and Zhang.
%This answers a question posed by Bousquet, Dailly, Duch\^{e}ne, Kheddouci and Parreau, and confirms a conjecture of Chartrand, Hallas and Zhang.
\end{abstract}

%%%%%%%%%%%%%%%%%%%%
\section{Introduction}

An edge coloring of a graph is \emph{vertex-distinguishing} if the set of colors of the edges incident to each vertex is different.
The pioneers of this concept include Harary and Plantholt~\cite{Harary}, Aigner, Triesch and Tuza~\cite{Tuza}, and Burris and Schelp~\cite{Burris1997}.
It has been widely studied for arbitrary edge colorings~\cite{Aigner,Burris1995}
and also for proper edge colorings~\cite{Bazgan,Balister,Riordan}.
The weaker notion where only adjacent vertices must be distinguished
%the sets of colors are required to be different only for adjacent vertices
has also been considered for both arbitrary edge colorings~\cite{Gyori,Hornak}
and for proper edge colorings~\cite{Zhang,Hatami,Lehel}.
Many variants have been investigated where the set of colors is replaced by the multiset of colors~\cite{Burris1994,Escuadro,Radcliffe}, the sum~\cite{Thomason,Seamone,Jacobson}, the difference~\cite{Tahraoui} or the product~\cite{Skowronek}.

In this paper, we are interested in the generalization where the colors are nonempty subsets of $[k]=\{1,2,\dots,k\}$ and we distinguish each vertex by the union of the colors of the edges incident to it.
Given a graph $G$ on $n$ vertices and a natural number $k$, an edge coloring $c:E(G) \rightarrow \PP([k]) \setminus \{\emptyset\}$ is \emph{union vertex-distinguishing} if its \emph{union vertex coloring} $c_\cup:V(G) \rightarrow \PP([k]) \setminus \{\emptyset\}$ given by
\[c_\cup(v)=\bigcup_{uv \in E(G)} c(uv),\]
is injective, i.e. $c_\cup(u) \not= c_\cup(v)$ for all distinct vertices $u$ and $v$.
The \emph{union vertex-distinguishing chromatic index} $\chi'_\cup(G)$ of $G$ is the smallest natural number $k$ such that $G$ admits a union vertex-distinguishing edge coloring.
This concept was introduced by Bousquet, Dailly, Duch\^{e}ne, Kheddouci and Parreau~\cite{Bousquet}, and independently by Chartrand, Hallas and Zhang~\cite{Hallas} under the name \emph{strong royal coloring}.

We shall only consider graphs without a component of order at most two otherwise a union vertex-distinguishing edge coloring does not exist.
Observe that $\chi'_\cup(G) \ge \left\lceil \log_2\left(n +1\right) \right\rceil$ since, in $c_\cup:V(G) \rightarrow \PP([k]) \setminus \{\emptyset\}$, there must be at least as many colors as vertices, i.e. $2^k-1 \ge n$.
On the other hand, Bousquet, Dailly, Duch\^{e}ne, Kheddouci and Parreau~\cite{Bousquet} proved that there are only three possible values for
\[\chi'_\cup(G) \in \left\{ \left\lceil \log_2\left(n +1\right) \right\rceil, \left\lceil \log_2\left(n +1\right) \right\rceil+1, \left\lceil \log_2\left(n +1\right) \right\rceil+2 \right\}.\]
Paths, complete binary trees, cycles (except $C_3,C_7$) and hypercubes satisfy $\chi'_\cup(G) = \left\lceil \log_2\left(n +1\right) \right\rceil$,
while complete graphs (except $K_{2^k}$ for all $k$) and graphs with large minimum degree satisfy $\chi'_\cup(G) = \left\lceil \log_2\left(n +1\right) \right\rceil +1$ (see~\cite{Bousquet,Hallas,Ali}).
However, no graphs $G$ with $\chi'_\cup(G) = \left\lceil \log_2\left(n +1\right) \right\rceil+2$ have been found.
Bousquet, Dailly, Duch\^{e}ne, Kheddouci and Parreau~\cite{Bousquet} asked whether they exist, while
Chartrand, Hallas and Zhang~\cite{Hallas} conjectured that they do not exist.
We prove this conjecture.

\begin{theorem}\label{thm:main}
$\chi'_\cup(G) \in \left\{ \left\lceil \log_2\left(n +1\right) \right\rceil, \left\lceil \log_2\left(n +1\right) \right\rceil+1 \right\}$ for any graph $G$ on $n$ vertices without a component of order at most two.
\end{theorem}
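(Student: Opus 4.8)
The plan is to show that every graph $G$ on $n$ vertices with no component of order at most two admits a union vertex-distinguishing edge coloring using $k := \lceil\log_2(n+1)\rceil+1$ colors, since the lower bound is already known and the known trichotomy (due to Bousquet et al.) guarantees $\chi'_\cup(G)$ is one of three consecutive values. Thus it suffices to rule out the top value $\lceil\log_2(n+1)\rceil+2$ by exhibiting an explicit coloring with $k$ colors. The basic idea: with $k$ colors there are $2^k-1 \ge 2(n+1)-1 = 2n+1 \ge n$ available nonempty subsets of $[k]$, so we have a factor-$2$ surplus of target labels over vertices, which should give enough slack.

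First I would reduce to connected graphs: if $G = G_1 \sqcup \cdots \sqcup G_t$ with each $|V(G_i)| = n_i \ge 3$, then $k = \lceil\log_2(n+1)\rceil+1 \ge \lceil\log_2(n_i+1)\rceil+1$ for each $i$, but one must be careful that the unions assigned to vertices in different components are globally distinct, not just distinct within each component. A clean way around this is to handle all components at once on a spanning structure: take a spanning forest $F$ of $G$ (each tree having $\ge 3$ vertices), color the non-forest edges with a fixed color $\{1\}$ (or more carefully, in a way that never hurts), and do the real work on $F$. Actually the safest route is to prove the theorem first for trees on $\ge 3$ vertices with a slightly stronger inductive hypothesis — namely that one can realize essentially any injective assignment of "large enough" target sets to the vertices, with some freedom at a designated root — and then assemble components.

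The core of the argument is therefore the tree case. Given a tree $T$ on $n \ge 3$ vertices, root it at a leaf $r$; I would process the vertices in a bottom-up order. The key observation is that if $v$ has children $v_1,\dots,v_d$ in the rooted tree, the edges $vv_i$ are "controlled" at level $v$, and the union at each $v_i$ is the union of $c(vv_i)$ with whatever came from below $v_i$. By choosing the edge colors $c(vv_i)$ greedily — giving $v_i$ the bits it still needs to reach its assigned target — one can steer $c_\cup(v_i)$ to a prescribed set, provided that set contains everything forced from below. The truly delicate points are: (i) the root and its unique neighbor, where there is no parent edge to fix things up, forcing a small local case analysis; and (ii) ensuring the whole scheme stays injective across all $n$ vertices. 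For injectivity I would fix in advance an injection $\phi: V(G) \to \PP([k])\setminus\{\emptyset\}$ into sets of size, say, at most $2$ won't suffice — rather I would reserve a block of coordinates to encode a serial number and use the remaining coordinates for the tree mechanics, exploiting the surplus $2^k - 1 \ge 2n+1$. Concretely, writing $k = m+1$ with $2^m \ge n+1$, I plan to use coordinate $k$ as a parity/flag bit and coordinates $1,\dots,m$ to carry distinct "addresses" in $[2^m-1] \supseteq [n]$, arranging that vertices whose addresses would collide are separated by the flag bit.

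The main obstacle I anticipate is handling the \emph{small-degree part of the tree} — long paths and subtrees with many degree-two vertices — because a degree-two vertex $v$ has its union $c_\cup(v) = c(e_1) \cup c(e_2)$ determined by just two edges, and along a path these constraints chain together rigidly, leaving little freedom; this is exactly why the exceptional small cycles $C_3, C_7$ and small paths show up in the literature. I expect the resolution to require a separate, careful treatment of maximal paths of degree-two vertices (assigning them consecutive "address" values so consecutive unions differ by controlled single-bit changes), glued to a treatment of the branch vertices where there is ample freedom. A secondary obstacle is the boundary bookkeeping when combining components and when the value of $\lceil\log_2(n+1)\rceil$ jumps (i.e.\ near powers of two), where the surplus $2^k-1-n$ can be as small as roughly $n$, still a constant factor but tight enough that the address-assignment must be done with care rather than waste. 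I would organize the write-up as: (1) reduction to the extremal coloring with $k = \lceil\log_2(n+1)\rceil+1$ colors; (2) a lemma giving a flexible union-distinguishing coloring of a tree with a free root; (3) the path/degree-two analysis; (4) assembly over components and conclusion via the known trichotomy.
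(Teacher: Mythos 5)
Your overall strategy --- prove the upper bound $\chi'_\cup(G)\le\lceil\log_2(n+1)\rceil+1$ by an explicit coloring and combine it with the known lower bound --- is the right target, and reducing to a spanning structure is in the spirit of the paper. But the heart of your argument, the bottom-up greedy coloring of a general rooted tree steering each $c_\cup(v)$ to a prescribed target set, is not actually carried out: you yourself flag the chains of degree-two vertices as ``the main obstacle'' and then defer its resolution (``I expect the resolution to require a separate, careful treatment''). This is not minor bookkeeping. Along a path, $c_\cup(v)=c(e_1)\cup c(e_2)$ is determined by only two edge colors, each shared with a neighbour, so the prescribed targets must satisfy a rigid chain of containment and union constraints; your ``address plus flag bit'' scheme gives no mechanism guaranteeing that a consistent system of targets exists, and a greedy pass cannot repair a collision discovered late because unions only grow. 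As written, the proposal is a plan whose hardest step is missing.

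The paper closes exactly this gap with an idea absent from your proposal: a \emph{minimal} spanning forest with no tree of order at most two is automatically a forest of $1$-stars (stars with each edge subdivided at most once), so every run of degree-two vertices has length one and the long-path difficulty you anticipate never arises. The remaining work is then purely set-theoretic: with $k=\lceil\log_2(n+1)\rceil$, one partitions $\PP([k])\setminus\{\emptyset\}$ into collections (``$m$-stars'') whose containment and union structure matches what a $1$-star of order $m$ can realize under the union vertex coloring, via a doubling induction on $k$; the non-forest edges of $G$ are then all colored $\{k+1\}$, which costs the single extra color. If you want to rescue your approach, the decisive step is to replace ``take a spanning forest'' by ``take a minimal spanning forest and observe that each component is a $1$-star,'' after which the tree lemma you need collapses to a concrete construction on stars with subdivided edges rather than an open-ended induction over arbitrary trees.
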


To prove Theorem~\ref{thm:main}, it is enough to prove the following result.
This reduction was made by Bousquet, Dailly, Duch\^{e}ne, Kheddouci and Parreau~\cite{Bousquet}.
A \emph{$1$-star} is a graph obtained from a star on at least three vertices by subdividing each edge at most once.

\begin{theorem}\label{thm:forest}
$\chi'_\cup(F)=\left\lceil \log_2\left(n +1\right) \right\rceil$ for any forest of $1$-stars $F$ on $n$ vertices.
\end{theorem}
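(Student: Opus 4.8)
The plan is to use the lower bound $\chi'_\cup(F)\ge k$ with $k:=\lceil\log_2(n+1)\rceil$ (already noted: the union vertex colouring embeds the $n$ vertices into $\PP([k])\setminus\{\emptyset\}$) and to match it. A union vertex-distinguishing colouring with colour set $[k]$ amounts to the choice of an injection $\phi:V(F)\to\PP([k])\setminus\{\emptyset\}$ together with an edge colouring whose union vertex colouring is $\phi$; since the components of $F$ are vertex-disjoint this separates into two tasks: (a) for a single $1$-star $H$, decide which families $\mathcal S\subseteq\PP([k])\setminus\{\emptyset\}$ with $|\mathcal S|=|V(H)|$ occur as $\phi(V(H))$ for a realising colouring; and (b) a packing task: choose pairwise disjoint such families $\mathcal S_1,\dots,\mathcal S_r$, one per component. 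Root each $1$-star at its centre $c$; its legs are pendant edges $cd$ ($d$ a \emph{short leaf}) and pendant paths $cm\ell$ ($m$ a \emph{middle vertex}, $\ell$ a \emph{long leaf}); let $p,q$ count them, so $|V(H)|=1+p+2q$ and $p+q\ge2$.

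For (a): in an edge colouring, $c$ receives the union of all incident colours, $d$ receives $c(cd)$, $\ell$ receives $c(m\ell)$, and $m$ receives $c(cm)\cup c(m\ell)\supseteq c(m\ell)$. Conversely I would prove that $\mathcal S$ (with $|\mathcal S|=1+p+2q$) equals $\phi(V(H))$ for some realising colouring \emph{if and only if} there is $T\in\mathcal S$ and a splitting of $\mathcal S\setminus\{T\}$ into a $p$-subfamily $\mathcal D$ and $q$ comparable pairs $A_i\subsetneq B_i$ with
\[\bigcup\mathcal D\ \cup\ \bigcup_i\bigl(B_i\setminus A_i\bigr)\ \subseteq\ T\ \subseteq\ \bigcup\mathcal D\ \cup\ \bigcup_i B_i ,\]
where $T=\phi(c)$, $\mathcal D$ lists the short-leaf colours and $\{A_i,B_i\}=\{\phi(\ell_i),\phi(m_i)\}$; the "if" direction uses the freedom $B_i\setminus A_i\subseteq c(cm_i)\subseteq B_i\cap T$ in assigning $c(cm_i)$. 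I would then use mainly the following consequence: if $T:=\bigcup\mathcal S$ lies in $\mathcal S$ and every element of $T$ belongs to at least two members of $\mathcal S$, then both containments hold for every admissible splitting, so it is enough that $\mathcal S\setminus\{T\}$ contain $q$ pairwise disjoint comparable pairs. (Note this already covers the smallest $1$-stars, e.g.\ $P_3$ via $\mathcal S=\{A,B,A\cup B\}$ with $A,B$ incomparable.) So (b) becomes: partition a subfamily of $\PP([k])\setminus\{\emptyset\}$ into families $\mathcal S_i$, each containing its own union, internally doubly covering that union, and having $\ge q_i$ disjoint comparable pairs below the union.

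For (b), a natural route is to induct, splitting $[k]=[k-1]\cup\{k\}$, so that the $2^k-1$ nonempty subsets are the $2^{k-1}-1$ subsets of $[k-1]$ together with the $2^{k-1}$ subsets containing $k$ — the latter a copy of $\PP([k-1])$ (with $2^{k-1}$ usable points) once $k$ is deleted, and $2^{k-1}\le n\le 2^k-1=(2^{k-1}-1)+2^{k-1}$. A component with more than $2^{k-1}$ vertices is given all subsets containing $k$ together with a carefully chosen set of subsets of $[k-1]$ — chosen both so that the resulting family realises it ($[k]$ is then the greatest set and $k$ is doubly covered, but the choice must still supply the required disjoint comparable pairs) and so that what remains of $\PP([k-1])$ still serves the other components, handled by induction. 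When every component has at most $2^{k-1}$ vertices the components are distributed between the two halves, each half handled by induction. The recurring compatibility point is that inserting $k$ into every set of the upper half preserves "greatest element", "doubly covered" and "disjoint comparable pairs".

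The routine parts are the verification of the criterion in (a) and the arithmetic of the split; the hard part is (b), where the choices must be made globally rather than greedily. The obstacles I anticipate: keeping enough disjoint comparable pairs for components with many long legs — the danger being a set that, once $k$ is adjoined, is comparable to nothing else still available; a two-way split that fails to balance (three medium components can each overflow a half), forcing either a component that straddles both halves — delicate, since the set its centre receives then contains $k$ and double covering must still be engineered — or a separate treatment of such configurations; and the tight regime $n=2^k-1$, in which every nonempty subset is used and each component's family must have a greatest element that is itself doubly covered within the family. I therefore expect the proof to require a deliberate, structured assignment of subsets to components together with a fair amount of case analysis, including for the straddling configuration and the smallest $1$-stars $P_3,P_4,P_5$.
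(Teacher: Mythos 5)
Your part (a) is correct and in fact sharper than what the paper uses: your iff criterion (a centre value $T$, a short-leaf family $\mathcal{D}$, and comparable pairs $A_i\subsetneq B_i$ with $\bigcup\mathcal{D}\cup\bigcup_i(B_i\setminus A_i)\subseteq T\subseteq\bigcup\mathcal{D}\cup\bigcup_i B_i$) specializes, when $T=\bigcup\mathcal{S}\in\mathcal{S}$ is doubly covered, to exactly the structure the paper axiomatizes as an ``$m$-star'' ($A_1=A_2\cup A_4$ and $A_{2i+1}\subset A_{2i}\subset A_1$). So the reduction to a pure set-partition problem is sound and matches the paper's Theorem~\ref{thm:set}.

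The gap is in (b), which is where the entire difficulty of the theorem lives, and your plan for it would fail as stated. Distributing whole components between the lower half $\PP([k-1])\setminus\{\emptyset\}$ (of size $2^{k-1}-1$) and the upper half (of size $2^{k-1}$) is generically impossible, not exceptionally so: if every component is a $P_3$ and $3\mid 2^k-1$ (i.e.\ $k$ even), then neither $2^{k-1}$ nor $2^{k-1}-1$ is divisible by $3$, so some component must straddle the halves. You flag the straddling configuration, the difficulty of supplying enough disjoint comparable pairs, and the tight regime $n=2^k-1$, but you resolve none of them; ``a deliberate, structured assignment together with a fair amount of case analysis'' is precisely the content that is missing. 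The paper's induction avoids straddling altogether by working at the level of individual stars rather than halves: it pairs up the components of odd order, replaces each pair $(m_{2i-1},m_{2i})$ by a single star of size $\frac{m_{2i-1}+m_{2i}}{2}$ at level $k-1$ (each even $m_j$ by $m_j/2$, and one leftover odd $m$ by $\frac{m-1}{2}$), and then proves a doubling lemma: writing $\A'=\{A\cup\{k\}:A\in\A\}$, the collection $\A\cup\A'$ splits into an $i$-star and a $(2m-i)$-star for every odd $i$, is itself a $2m$-star, and becomes a $(2m+1)$-star after adjoining $\{k\}$. The explicit interleaved orderings in that lemma --- which manufacture the needed comparable pairs from the containments $A_j\subset A_j\cup\{k\}$ created by doubling --- are the key idea your proposal lacks; without an equivalent of it the plan does not go through.
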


% The rest of this paper is organized as follows.
% In Section~\ref{sec:proof}, we prove the main results.
% We conclude the paper in Section~\ref{sec:conclude} with some remarks.
The rest of this paper is organized as follows.
In the next section, we shall prove Theorem~\ref{thm:forest} and deduce Theorem~\ref{thm:main} from it.
We then conclude with some remarks and open problems.

%%%%%%%%%%%%%%%%%%%%
\section{Proofs of the main results}
\label{sec:proof}

First, we show how to deduce Theorem~\ref{thm:main} from Theorem~\ref{thm:forest}.

\begin{proof}[Proof of Theorem~\ref{thm:main}]
Let $G$ be a graph on $n$ vertices without a component of order at most two.
We claim that $G$ has a spanning forest of $1$-stars $F$.
Indeed, consider a minimal spanning forest of $G$ without a tree of order at most two, and so the deletion of any edge creates a tree of order at most two.
We shall show that any tree $T$ in the forest is a $1$-star.
If there is a vertex $v$ in $T$ of degree at least three, then $T$ is a $1$-star since, for each neighbor $u$ of $v$, the component of $u$ in $T-uv$ has order at most two.
So we may assume that all vertices in $T$ have degree at most two, i.e. $T$ is a path.
Then $\lvert V(T) \rvert \le 5$ otherwise the deletion of the middle edge does not create a tree of order at most two.
Therefore, $T$ is a $1$-star.

Let $k=\left\lceil \log_2\left(n +1\right) \right\rceil$.
By Theorem~\ref{thm:forest}, there exists an edge coloring $c:E(F) \rightarrow \PP([k]) \setminus \{\emptyset\}$ whose union vertex coloring $c_\cup$ is injective.
We extend $c$ to $c':E(G) \rightarrow \PP([k+1]) \setminus \{\emptyset\}$ by
\[c'(e) = \begin{cases}
c(e) &\text{for }e \in E(F),\\
\{k+1\} &\text{otherwise}.
\end{cases}\]
Then its union vertex coloring $c'_\cup$ is also injective since $c'_\cup(v) \cap [k] = c_\cup(v)$ for all $v \in V(G)$.
Therefore, $\chi'_\cup(G) \le k+1$.
\end{proof}

To prove Theorem~\ref{thm:forest}, the key idea is to partition $\PP([k]) \setminus \{\emptyset\}$ into collections of arbitrary sizes with appropriate properties.
We would like each collection to be the image of the vertices of each $1$-star under the union vertex coloring.
We choose the properties of a collection of size $m$ in such a way that, for any $1$-star of order $m$, there exists an edge coloring whose union vertex coloring maps the vertices of the $1$-star onto the collection.

A collection of distinct sets $A_1,A_2,\dots,A_m$ is an \emph{$m$-star} if
\begin{itemize}
\item for $m\ge4$, $A_1=A_2 \cup A_4$ and $A_{2i+1} \subset A_{2i} \subset A_1$ for each $1 \le i \le \frac{m}{2}$,
\item for $m=3$, $A_1 = A_2 \cup A_3$,
\item for $m=2$, $A_2 \subset A_1$,
\item for $m=1$, no condition.
\end{itemize}
We also allow $m$ to be $1,2$ as we shall prove the existence of such a partition by induction.
A collection of sets is an \emph{$(m_1,m_2,\dots,m_r)$-forest} if it can be partitioned into an $m_1$-star, an $m_2$-star, $\dots$, and an $m_r$-star.

\begin{theorem}\label{thm:set}
If $m_1+m_2+\dots+m_r = 2^k-1$ then
$\PP([k]) \setminus \{\emptyset\}$ is an $(m_1,m_2,\dots,m_r)$-forest.
\end{theorem}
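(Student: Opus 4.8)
The plan is to prove Theorem~\ref{thm:set} by induction on $k$, checking the finitely many cases with $k\le 2$ directly. For the inductive step, write $\PP([k])\setminus\{\emptyset\}=L\sqcup H$, where $L=\PP([k-1])\setminus\{\emptyset\}$ consists of the nonempty subsets of $[k]$ not containing $k$ and $H=\{S\subseteq[k]:k\in S\}$ of those containing it, so $|L|=2^{k-1}-1$ and $|H|=2^{k-1}$. The key feature of this splitting is that $L$ is literally $\PP([k-1])\setminus\{\emptyset\}$ and that $S\mapsto S\setminus\{k\}$ maps $H\setminus\{\{k\}\}$ bijectively onto $\PP([k-1])\setminus\{\emptyset\}$; both maps preserve $\subseteq$, $\subsetneq$ and $\cup$, so they carry $m$-stars to $m$-stars. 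The only thing left over under this identification is the singleton $\{k\}$, which is the minimum of $H$ and can therefore serve as a lone $1$-star or be slotted into any leaf or chain-bottom position of a star contained in $H$.

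I would first dispose of the \emph{balanced case}, in which $\{m_1,\dots,m_r\}$ splits as $\mathcal{M}_L\sqcup\mathcal{M}_H$ with $\sum\mathcal{M}_L=2^{k-1}-1$ and $\sum\mathcal{M}_H=2^{k-1}$. Apply the inductive hypothesis to $L$ with the sizes $\mathcal{M}_L$. For $H$, note that $\mathcal{M}_H$ cannot consist entirely of $3$'s because $3\nmid 2^{k-1}$, so it has a part $h\ne3$; I would replace $h$ by $h-1$ (deleting it if $h=1$), apply the inductive hypothesis to $H\setminus\{\{k\}\}$ with the resulting sizes, and then reinsert $\{k\}$: as its own $1$-star when $h=1$; as a new two-element chain with the $(h-1)$-star when $h=2$ or $h$ is odd, taking $\{k\}$ below the relevant existing set (legitimate since every member of $H\setminus\{\{k\}\}$ properly contains $\{k\}$); and as a new lone leaf of the $(h-1)$-star when $h\ge4$ is even. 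This settles every $r$ for which a balanced split exists.

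The main obstacle is the \emph{unbalanced case}, where no such split of $\{m_1,\dots,m_r\}$ exists — this genuinely occurs (already when $r=1$, and for instance for $(6,6,3)$ with $k=4$), forced by large parts blocking the relevant subset sums. Here I would let one star, of size $m$ say, \emph{straddle} the two halves: give it the top $[k]\in H$, and realise its other $m-1$ elements as some $t$ sets drawn from $L$ together with $m-1-t$ sets drawn from $H\setminus\{[k]\}$, arranged into chains of length at most $2$ plus at most one lone leaf, with two of the chain-tops (one of them containing $k$) having union $[k]$, as demanded by the condition $A_1=A_2\cup A_4$. The $L$-part of the straddling star must be chosen so that what remains of $L$ is exactly decomposable by the stars assigned to $L$, while the other stars are distributed between the halves by the inductive hypothesis as before. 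The degenerate instance $r=1$ is just the assertion that the nonempty proper subsets of $[k]$ can be paired into two-element chains with two chain-tops unioning to $[k]$, which one obtains by pairing each set with the set got by adjoining or removing the element $1$ and repairing the two sets that this strands.

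What I expect to be the real work is the simultaneous bookkeeping in the unbalanced case: one must choose which star straddles, how the remaining stars are allotted to $L$ and to $H$, and the split point $t$, so that the halves receive exactly $2^{k-1}-1$ and $2^{k-1}$ elements respectively \emph{and} the straddling star, together with the chosen leftover of $L$, can actually be realised with the required shapes. The borderline value $m_1=2^{k-1}$ deserves separate attention, since then the straddling star must still contain at least one proper subset of $[k]$ through $k$ in order to fulfil $A_1=A_2\cup A_4$, so it cannot simply swallow all of $L$. Throughout, permitting $1$- and $2$-stars is essential rather than cosmetic: single sets and two-element chains are precisely the debris that appears when a larger star is peeled off and when $\{k\}$ is absorbed, and the induction would not close without them.
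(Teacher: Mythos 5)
Your decomposition of $\PP([k])\setminus\{\emptyset\}$ into the ``low'' half $L$ and the ``high'' half $H$ is a genuinely different route from the paper's, and your balanced case is essentially sound (modulo one slip noted below). But the unbalanced case, which you yourself flag as ``the real work'' and which you correctly observe is unavoidable (e.g.\ $r=1$, or $(6,6,3)$ with $k=4$), is not actually carried out, and the obstacle there is not mere bookkeeping: once a straddling star removes $t$ sets from $L$, what remains of $L$ is an arbitrary $(2^{k-1}-1-t)$-element subcollection of $\PP([k-1])\setminus\{\emptyset\}$, and your induction hypothesis says nothing about partitioning such a subcollection into stars --- it only applies to all of $\PP([k-1])\setminus\{\emptyset\}$. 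So you would need either a substantially stronger inductive statement (characterizing which subcollections admit a prescribed forest decomposition) or a very delicate choice of which sets the straddling star takes, and neither is supplied. As written, the induction does not close.

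The paper avoids the balanced/unbalanced dichotomy altogether by making \emph{every} star straddle the two halves: it halves the multiset $(m_1,\dots,m_r)$ arithmetically (pairing up the odd parts, which occur in odd number), applies induction to get $m'_i$-stars $\A_i$ in $\PP([k-1])\setminus\{\emptyset\}$, and then invokes a doubling lemma: $\A_i\cup\A_i'$ (where $\A_i'$ adjoins $k$ to every set of $\A_i$) is a $2m'_i$-star, becomes a $(2m'_i+1)$-star after absorbing $\{k\}$, and splits into an $(i,2m'_i-i)$-forest for any odd $i$. That lemma is where the real combinatorial work lives, and it is exactly the piece your outline is missing. Two smaller points: in your balanced case the ``lone leaf'' insertion of $\{k\}$ fails for $h=4$, since passing from a $3$-star to a $4$-star changes the union condition from $A_1=A_2\cup A_3$ to $A_1=A_2\cup A_4$ (you must instead insert $\{k\}$ below $A_2$ and shift the old $A_3$ to position $4$); and your $r=1$ pairing via $S\mapsto S\,\triangle\,\{1\}$ strands $\{1\}$ and $[k]\setminus\{1\}$, which are incomparable, so the promised ``repair'' is a cascading re-pairing that needs to be spelled out rather than a local swap of the two stranded sets.
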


Now, we show how to deduce Theorem~\ref{thm:forest} from Theorem~\ref{thm:set}.

\begin{proof}[Proof of Theorem~\ref{thm:forest}]
Let $F$ be a forest of $1$-stars of orders $m_1,m_2,\dots,m_r \ge 3$ with $n=m_1+m_2+\dots+m_r$,
and let $k=\left\lceil \log_2\left(n +1\right) \right\rceil$.
Since $m_1+m_2+\dots+m_r \le 2^k-1$, by Theorem~\ref{thm:set},
%the collection $\PP([k]) \setminus \{\emptyset\}$ is an $(m_1,m_2,\dots,m_r,2^k-1-n)$-forest.
the collection $\PP([k]) \setminus \{\emptyset\}$ can be partitioned into an $m_1$-star, an $m_2$-star, $\dots$, an $m_r$-star and a $(2^k-1-n)$-star.
To prove that $\chi'_\cup(F) \le k$, it is enough to find an edge coloring of $F$ whose union vertex coloring maps the $1$-star of order $m_i$ onto the $m_i$-star for each $i$.

It remains to show that, given an $m$-star $\A=\{A_1,A_2,\dots,A_m\} \subset \PP([k]) \setminus \{\emptyset\}$ with $m \ge 3$, the edges of any $1$-star of order $m$ can be assigned nonempty subsets of $[k]$ so that the unions of the subsets assigned to the edges incident to each vertex, are $A_1,A_2,\dots,A_m$.
Consider a $1$-star of order $m$ obtained from a star with vertices $v,v_1,v_2,\dots,v_{m-l-1}$ where $v$ is the center, by subdividing the edges $vv_1,vv_2,\dots,vv_l$ with vertices $u_1,u_2,\dots,u_l$ respectively.
%Consider a $1$-star with vertices $v,v_1,v_2,\dots,v_{m-l-1},u_1,u_2,\dots,u_l$ where $v$ is the center, $v_1,v_2,\dots,v_{m-l-1}$ are the neighbors of $v$, and $u_i$ is the leaf adjacent to $v_i$ for each $i$.
To obtain $A_1,A_2,\dots,A_m$ as the unions at the vertices
\[v,u_1,v_1,u_2,v_2,\dots,u_l,v_l,v_{l+1},v_{l+2},\dots,v_{m-l-1}\]
respectively,
we assign the subsets $A_2,A_3,\dots,A_m$ to the edges
\[vu_1,u_1v_1,vu_2,u_2v_2,\dots,vu_l,u_lv_l,vv_{l+1},vv_{l+2},\dots,vv_{m-l-1}\]
respectively. Indeed, the union vertex coloring $c_\cup$ satisfies
\begin{align*}
c_\cup(v)
&= A_2 \cup A_4 \cup \dots \cup A_{2l} \cup A_{2l+2} \cup A_{2l+3} \cup \dots \cup A_m = A_1,\\
c_\cup(u_i)
&= A_{2i} \cup A_{2i+1} = A_{2i} \quad \text{for }1 \le i \le l,\\
c_\cup(v_i) &=
\begin{cases}
A_{2i+1} &\text{for }1 \le i \le l,\\
A_{i+l+1} &\text{for }l+1 \le i \le m-l-1.
\end{cases}
\end{align*}
by the properties of the $m$-star.
\end{proof}

To prove Theorem~\ref{thm:set}, we shall construct new stars from existing ones. For instance, we show that if we double an $m$-star by adding a new element to every set then the new collection can be partitioned into an $i$-star and a $(2m-i)$-star for any odd number $i$.
For a collection $\A \subset \PP([k-1]) \setminus \{\emptyset\}$, we write $\A' = \{A\cup\{k\}: A\in\A\} \subset \PP([k]) \setminus \{\emptyset\}$.

\begin{lemma}\label{lem:double}
If $\A \subset \PP([k-1]) \setminus \{\emptyset\}$ is an $m$-star then
\begin{enumerate}[label={$(\roman*)$}, ref={\thelemma$(\roman*)$}]
\item $\A\cup\A'$ is an $(i,2m-i)$-forest for each odd number $1\le i\le m$,
\item $\A\cup\A'\cup \{\{k\}\}$ is a $(2m+1)$-star, and
\item $\A\cup\A'$ is a $2m$-star.
\end{enumerate}
\end{lemma}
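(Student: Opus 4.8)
The plan is to prove the three statements of Lemma~\ref{lem:double} in order, building each on the previous one. Throughout, write $\A = \{A_1, \dots, A_m\}$ for the given $m$-star and $\A' = \{A_1', \dots, A_m'\}$ where $A_j' = A_j \cup \{k\}$. Since $k \notin A_j$ for any $j$, all $2m$ sets $A_1, \dots, A_m, A_1', \dots, A_m'$ are distinct, and $A_j \subset A_j'$ for each $j$.

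For part~$(i)$, the idea is to peel off an $i$-star using the doubled copies of the first few sets, and leave behind a $(2m-i)$-star. Since $i$ is odd with $1 \le i \le m$, the relevant prefix of the $m$-star $A_1, \dots, A_i$ has the right parity to itself form an $i$-star (checking the three defining cases $i=1$, $i=3$, $i\ge 5$ — here one uses that the $m$-star conditions $A_{2t+1}\subset A_{2t}\subset A_1$ and $A_1 = A_2\cup A_4$ are inherited by any odd-length prefix, noting $A_1 = A_2 \cup A_4$ needs $i \ge 5$, while for $i = 3$ one needs $A_1 = A_2 \cup A_3$, which does \emph{not} follow from the $m$-star axioms for $m \ge 4$). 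So the cleaner route is: take the $i$-star to be $\{A_1', A_2', \dots, A_i'\}$ when this inherits the structure, or more likely to reindex so that the small star is built from $\A'$-sets sitting above $A_1$. Concretely, I expect the correct split to be: the $i$-star is $\{A_1', A_3', A_2', A_5', A_4', \dots\}$ reusing the nesting $A_{2t+1}' \subset A_{2t}' \subset A_1'$ and $A_1' = A_2' \cup A_4'$ (all primed versions of the original relations, which survive since unioning with $\{k\}$ is compatible: $A_2' \cup A_4' = (A_2 \cup A_4) \cup \{k\} = A_1 \cup \{k\} = A_1'$), and the $(2m-i)$-star is $\{A_1, A_2, \dots, A_m\} \cup \{A_{i+1}', \dots, A_m'\}$ arranged with $A_1$ as apex, $A_1 = A_2 \cup A_4$ (valid since $m \ge 4$ here, or handled separately for small $m$), and all remaining sets nested below $A_1$ — which they are, since $A_j \subset A_1$ and $A_j' = A_j \cup \{k\}$ while $k$... wait, $A_j' \not\subset A_1$. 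So in fact the $(2m-i)$-star's apex must be $A_1'$, not $A_1$: one takes apex $A_1'$ with $A_1' = A_2 \cup A_4'$ or similar, and places both $A_j$'s and leftover $A_j'$'s beneath it since $A_j \subset A_1 \subset A_1'$ and $A_j' \subset A_1'$. The bookkeeping of \emph{which} sets go to even vs.\ odd positions (to respect $A_{2t+1} \subset A_{2t}$) is the fiddly part, and small cases $m = 1, 2$ must be checked by hand.

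Part~$(ii)$ should follow by taking the new set $\{k\}$ as the apex of the $(2m+1)$-star: set $B_1 = \{k\}$... no, that fails since $B_1$ must contain everything. Rather, the apex of a $(2m+1)$-star needs $B_1 = B_2 \cup B_4$ with all $B_{2t+1} \subset B_{2t} \subset B_1$, so $B_1$ must be the largest set, i.e.\ $A_1'$. Then one needs $A_1' = B_2 \cup B_4$: take $B_2 = A_1$ and $B_4 = \{k\}$ (or $B_4 = A_1'$), giving $A_1 \cup \{k\} = A_1'$. The remaining $2m - 2$ sets — namely $A_2, \dots, A_m$ and $A_2', \dots, A_m'$ — all lie inside $A_1'$, and one distributes them into the odd and even slots $B_3, B_5, \dots, B_{2m+1}$ so that each odd-indexed set is contained in the even-indexed set just before it. Pairing $A_j$ with $A_j'$ (so $A_j \subset A_j'$) into consecutive (even, odd)... but the convention is $B_{2t+1} \subset B_{2t}$, i.e.\ odd sits inside even, so put $A_j'$ at position $2t$ and $A_j$ at position $2t+1$. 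This works for $j = 2, \dots, m$, filling positions $4, 5, 6, 7, \dots$ — and we already used position $4$ for $\{k\}$, so there's an off-by-one to resolve: use $B_4 = A_2'$, $B_5 = A_2$, $B_6 = A_3'$, $B_7 = A_3$, \dots, and separately place $\{k\}$ into position $3$ with $B_3 = \{k\} \subset B_2 = A_1$. That accounts for $1 + 1 + 1 + 2(m-1) = 2m+1$ sets. Good. Part~$(iii)$ is the analogous argument dropping $\{k\}$: a $2m$-star with apex $A_1'$, $B_1 = A_1' = A_2 \cup A_4'$ (needs care — is $A_2 \cup A_4' = A_1'$? yes: $A_2 \cup A_4 \cup \{k\} = A_1 \cup \{k\}$), and the remaining sets paired $(A_j', A_j)$ into consecutive even/odd positions; the small cases $m=1,2$ are again checked directly.

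\textbf{The main obstacle} I anticipate is purely organizational rather than conceptual: getting the indexing conventions exactly right so that the parity conditions ($B_{2t+1} \subset B_{2t} \subset B_1$) and the apex condition ($B_1 = B_2 \cup B_4$) hold \emph{simultaneously}, especially since part~$(i)$ must work for \emph{every} odd $i$ in range, forcing a flexible assignment. A secondary nuisance is the boundary behavior: the $m$-star definition has four separate cases ($m \ge 4$, $m = 3$, $m = 2$, $m = 1$), so e.g.\ proving $\A \cup \A'$ is a $2m$-star when $m = 2$ (so $2m = 4$) requires verifying the $m \ge 4$ clause from scratch rather than quoting structure, and similarly $m = 1$ (giving a $2$-star or $3$-star) must be handled ad hoc. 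I would isolate these small cases at the start, then present the generic construction with an explicit reindexing table.
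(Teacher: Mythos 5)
Your overall strategy (double the star, use $A_1'$ as the apex of the large piece, pair $A_j'\supset A_j$ into consecutive even/odd slots) is in the same spirit as the paper's proof, but two of your concrete constructions fail.

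In part $(i)$, your ``cleaner route'' takes the $i$-star to be $\{A_1',A_2',\dots,A_i'\}$ and the $(2m-i)$-star to be $\{A_1,\dots,A_m\}\cup\{A_{i+1}',\dots,A_m'\}$. The apex of a star must contain every other set in it, hence equals the union of the whole collection (from $B_1=B_2\cup B_4$ and $B_j\subseteq B_1$ for all $j$); for your second collection with $i<m$ that union is $A_1\cup\{k\}=A_1'$, which is not a member of the collection because you have already spent $A_1'$ on the $i$-star. You notice the symptom ($A_j'\not\subset A_1$), but your fix (``the apex must be $A_1'$'') contradicts your own partition, so the split cannot work for any odd $i<m$. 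The paper does the opposite: the $i$-star is the \emph{unprimed} prefix $A_1,\dots,A_i$ (valid for odd $i\ne 3$), and the $(2m-i)$-star gets $A_1'$ as apex together with all the primed sets and the leftover unprimed ones. You also correctly flag that $i=3$ breaks the prefix approach (since $A_1=A_2\cup A_3$ is not an $m$-star axiom) but never resolve it; the paper handles it with the ad hoc $3$-star $A_2',A_3',A_2$ (note $A_3'\cup A_2=A_2'$) and a rearranged $(2m-3)$-star.

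In part $(ii)$, you place $B_3=\{k\}$ beneath $B_2=A_1$, but $\{k\}\not\subset A_1$ because $A_1\subseteq[k-1]$. The paper instead puts $A_m$ at position $3$ (fine, as $A_m\subset A_1$) and $\{k\}$ at the final odd position $2m+1$, beneath $B_{2m}=A_m'$, which does contain $k$. Since your part $(iii)$ is obtained from your part $(ii)$, it is also not yet established. These defects are repairable, but as written the proof does not go through.
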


\begin{proof}
$(i)$ Suppose first that $m\ge6$ is even.
Let $\A=\{A_1,A_2,\dots,A_m\}$ be such that $A_1=A_2 \cup A_4$ and $A_{2j+1} \subset A_{2j} \subset A_1$ for each $1 \le j \le \frac{m}{2}$.
Then the first $i$ sets form an $i$-star if $i\not=3$, and the remaining $2m-i$ sets in $\A\cup\A'$ can be ordered to form a $(2m-i)$-star as follows:
\[A'_1,A'_2,A'_3,A'_4,A'_5,\dots,A'_{m-2},A'_{m-1},A_{i+1},A_{i+2},\dots,A_{m-2},A_{m-1},A'_m,A_m\]
where $A'_i = A_i\cup\{k\}$.

For $i=3$, the sets $A'_2,A'_3,A_2$ form a $3$-star and 
\[A'_1,A_1,A_3,A'_4,A'_5,A'_6,A'_7,\dots,A'_{m-2},A'_{m-1},A_4,A_5,\dots,A_{m-2},A_{m-1},A'_m,A_m\]
form a $(2m-3)$-star.

Suppose now that $m\ge5$ is odd. Take the above partition for $\lvert \A \rvert = m+1$ and delete the final two sets $A'_{m+1},A_{m+1}$ from the second star.

For the remaining case $m\le4$, we partition $\A\cup\A'$ as follows:
\begin{itemize}
    \item for $m=4$ and $i=1$, $A_1$ and $A'_1,A'_2,A'_3,A'_4,A_4,A_2,A_3$,
    \item for $m=4$ and $i=3$, $A'_2,A'_3,A_2$ and $A'_1,A_1,A_3,A'_4,A_4$,
    \item for $m=3$ and $i=1$, $A_1$ and $A'_1,A'_2,A_2,A'_3,A_3$,
    \item for $m=3$ and $i=3$, $A_1,A_2,A_3$ and $A'_1,A'_2,A'_3$,
    \item for $m=2$ and $i=1$, $A_2$ and $A'_1,A'_2,A_1$,
    \item for $m=1$ and $i=1$, $A_1$ and $A'_1$.
\end{itemize}

$(ii)$ Let $\A=\{A_1,A_2,\dots,A_m\}$ be an $m$-star for $m\ge2$.
Then the sets in $\A\cup\A' \cup \{\{k\}\}$ can be ordered to form a $(2m+1)$-star as follows:
\[A'_1,A_1,A_m,A'_2,A_2,A'_3,A_3,\dots,A'_{m-1},A_{m-1},A'_m,\{k\}.\]
For $m=1$, the sets $A'_1,A_1,\{k\}$ form a $3$-star.

$(iii)$ Take the ordering in $(ii)$ and delete the final set $\{k\}$.
\end{proof}

We are now ready to prove Theorem~\ref{thm:set}.

\begin{proof}[Proof of Theorem~\ref{thm:set}]
We proceed by induction on $k$.
The result is trivial for $k=1$.
Suppose that $m_1+m_2+\dots+m_r = 2^k-1$.
Since there are an odd number of odd $m_i$'s, without loss of generality, we may assume that $m_1,m_2,\dots,m_{2s+1}$ are odd and $m_{2s+2},m_{2s+3},\dots,m_r$ are even.
Rewrite
\begin{align*}
2^{k-1}-1
&= \frac12\left(m_1+m_2+\dots+m_r-1\right)\\
&= \frac{m_1+m_2}{2}+\frac{m_3+m_4}{2}+\dots+\frac{m_{2s+1}-1}{2}+\frac{m_{2s+2}}{2}+\frac{m_{2s+3}}{2}+\dots+\frac{m_r}{2}\\
&= m'_1+m'_2+\dots+m'_{s+1}+m'_{s+2}+m'_{s+3}+\dots+m'_{r-s}
\end{align*}
where the summands in the second line are defined to be the $m'_i$'s in the third line.
By the induction hypothesis, $\PP([k-1]) \setminus \{\emptyset\}$ is an $(m'_1,m'_2,\dots,m'_{r-s})$-forest, i.e. there is a partition $\PP([k-1]) \setminus \{\emptyset\} = \A_1 \cup \A_2 \cup \dots \cup \A_{r-s}$ such that $\A_i$ is an $m'_i$-star for each $i$.
By Lemma~\ref{lem:double}, we have
\begin{enumerate}[label={$(\roman*)$}, ref={\thelemma$(\roman*)$}]
\item $\A_i \cup \A'_i$ is an $(m_{2i-1},m_{2i})$-forest for each $1\le i \le s$,
\item $\A_{s+1} \cup \A'_{s+1} \cup \{\{k\}\}$ is an $m_{2s+1}$-star, and
\item $\A_i \cup \A'_i$ is an $m_{i+s}$-star for each $s+2\le i \le r-s$.
\end{enumerate}
Since the above collections form a partition of $\PP([k]) \setminus \{\emptyset\}$, it is an $(m_1,m_2,\dots,m_r)$-forest.
\end{proof}

%%%%%%%%%%%%%%%%%%%%
\section{Concluding remarks}
\label{sec:conclude}

What if we are allowed to color the edges with the empty set as well?
It turns out our argument can be adapted to show that every graph can be optimally colored, i.e. $\chi'_\cup(G) = \left\lceil \log_2 n \right\rceil$ for any graph $G$ on $n$ vertices without a component of order at most two.
Indeed, in the proof of Theorem~\ref{thm:main}, we color the edges outside the spanning forest of $1$-stars with $\emptyset$ instead of $\{k+1\}$, and Theorem~\ref{thm:set} is modified to state that if $m_1+m_2+\dots+m_r = 2^k$ then
$\PP([k])$ is an $(m_1,m_2,\dots,m_r)$-forest.

We now know that $\chi'_\cup(G) \in \left\{ \left\lceil \log_2\left(n +1\right) \right\rceil, \left\lceil \log_2\left(n +1\right) \right\rceil+1 \right\}$.
It would be interesting to characterize the graphs that can be optimally colored.
This is open even for trees (see~\cite{Bousquet,Hallas}).
%asked whether trees can be optimally colored.
We think that forests can be optimally colored.

\begin{conjecture}
$\chi'_\cup(F)=\left\lceil \log_2\left(n +1\right) \right\rceil$ for any forest $F$ on $n$ vertices.
\end{conjecture}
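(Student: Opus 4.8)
The plan is to reproduce the architecture of the proof of Theorem~\ref{thm:forest}: reduce the conjecture to a statement about partitioning $\PP([k])\setminus\{\emptyset\}$ into blocks realisable as union vertex colorings of the trees of $F$, and prove that partition statement by induction on $k$ via a doubling lemma generalizing Lemma~\ref{lem:double}. The first thing I would record is a clean realisability criterion to replace the somewhat rigid notion of $m$-star. For a tree $T$, call a collection $\A$ of $|V(T)|$ distinct nonempty subsets of $[k]$ \emph{$T$-feasible} if there is a bijection $\phi\colon V(T)\to\A$ with $\phi(u)\cap\phi(v)\neq\emptyset$ for every edge $uv\in E(T)$ and $\phi(v)\subseteq\bigcup_{u\sim v}\phi(u)$ for every $v\in V(T)$. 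Then $\A$ is $T$-feasible precisely when it is the image of $c_\cup$ for some union vertex-distinguishing coloring of $T$: for the nontrivial direction put $c(uv)=\phi(u)\cap\phi(v)$, which is nonempty and yields $c_\cup(v)=\phi(v)\cap\bigcup_{u\sim v}\phi(u)=\phi(v)$; the converse holds because $c(uv)\subseteq c_\cup(u)\cap c_\cup(v)$ for every edge. With this criterion, the conjecture reduces to the following: \emph{if $F$ is a forest whose components are trees $T_1,\dots,T_r$ of orders $m_1,\dots,m_r\ge3$ and $k=\lceil\log_2(n+1)\rceil$, then $\PP([k])\setminus\{\emptyset\}$ can be partitioned into a $T_i$-feasible block of size $m_i$ for each $i$, together with one leftover block of size $2^k-1-n$}. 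As in the present paper it is convenient to insist that the leftover block be an $m$-star, so that every block has a shape whose behaviour under doubling is already understood through Lemma~\ref{lem:double}.

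Next I would run the induction on $k$ exactly as in the proof of Theorem~\ref{thm:set}: assuming $m_1+\dots+m_r=2^k-1$, pair up the odd orders (an odd number of them, so one is left unpaired), halve each even order, and halve the unpaired odd order after subtracting $1$, thereby writing $2^{k-1}-1$ as a sum of new orders; invoke the inductive hypothesis to partition $\PP([k-1])\setminus\{\emptyset\}$ into feasible blocks for a forest whose trees realise these new orders; and reassemble by the doubling operation $\A\mapsto\A\cup\A'$, adjoining $\{k\}$ for the unpaired odd tree. The engine this needs is a \emph{doubling lemma for feasible blocks}: if $\A\subseteq\PP([k-1])\setminus\{\emptyset\}$ is $S$-feasible for a tree $S$ on $m$ vertices, then $(i)$ $\A\cup\A'$ splits into a $T_1$-feasible block and a $T_2$-feasible block for suitable trees $T_1,T_2$ of odd orders summing to $2m$, $(ii)$ $\A\cup\A'\cup\{\{k\}\}$ is $T$-feasible for a suitable tree $T$ on $2m+1$ vertices, and $(iii)$ $\A\cup\A'$ is $T$-feasible for a suitable tree $T$ on $2m$ vertices. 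Each of these should follow from the realisability criterion by writing down an explicit bijection; for instance, if $S$ is a path with vertices mapped in order to $B_1,\dots,B_m$, then the ordering $B_1,\dots,B_m,B'_m,B'_{m-1},\dots,B'_1$ witnesses $(iii)$ for $T$ a path on $2m$ vertices, and analogous orderings should handle caterpillars and spiders.

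The step I expect to be the genuine obstacle is the \emph{inverse} direction needed to close the induction: given each target tree $T$, one must be able to \emph{choose} a tree $S$ on about half as many vertices whose double realises $T$ in the required sense, and, given each pair of odd-order target trees, a single tree of half their total order whose split-double realises the pair. For $1$-stars this inverse is exactly what the combinatorics of $m$-stars in Lemma~\ref{lem:double} encodes, and for paths, caterpillars, and spiders it appears to come from an explicit pairing of the vertices of $T$ followed by contraction. But a general tree need not admit a near-perfect matching, and may interleave long induced paths, vertices of large degree, and deep branching, so there is no canonical way to ``halve'' it; checking that $T$-feasibility is both produced by and survives such a halving, uniformly over all tree shapes, is where I expect the argument to stall, and is presumably the reason the conjecture remains open.
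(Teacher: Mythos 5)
This statement is the paper's concluding \emph{conjecture}: the paper offers no proof of it, so there is nothing to compare your attempt against, and the only question is whether your proposal actually closes the problem. It does not, and you say so yourself. Your realisability criterion is correct and is a genuinely cleaner reformulation than the paper's $m$-stars (the verification that $c(uv)=\phi(u)\cap\phi(v)$ recovers $c_\cup=\phi$ is right, as is the converse via $c(uv)\subseteq c_\cup(u)\cap c_\cup(v)$), and the reduction to a partition statement plus the forward doubling constructions for paths are sound. But the proof has a genuine, unfilled gap exactly where you locate it, and it is worth being precise about why it is a gap in the logic and not merely a technical chore. In the inductive step the component trees $T_1,\dots,T_r$ are \emph{prescribed} by the given forest, so your doubling lemma is quantified the wrong way around: you need, for each given tree $T$ of order $2m$ or $2m+1$, to \emph{construct} a tree $S$ of order $m$ such that \emph{every} $S$-feasible block doubles to a $T$-feasible block, and for each given pair of odd-order trees a single tree whose double splits into feasible blocks for that specific pair --- not ``suitable trees $T_1,T_2$'' of your choosing. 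For $1$-stars this inverse exists because a $1$-star halves canonically (pair each subdivision vertex with its leaf), which is precisely what the $m$-star axioms encode; for a general tree no such halving is available, and no candidate construction is proposed. A secondary unaddressed point: the induction must bottom out through blocks of size $1$ and $2$ (the paper admits $1$- and $2$-stars as intermediate objects precisely for this reason), and your notion of $T$-feasibility has no analogue for these degenerate block sizes or for the leftover block interacting with halved trees of order $\le 2$. So the proposal is a correct restatement of the paper's strategy together with an accurate diagnosis of why that strategy does not yet extend to all forests; it is not a proof.
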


Another direction forward could be to study natural variants including the case where the edge coloring must be proper and the case where only adjacent vertices must be distinguished.

%%%%%%%%%%%%%%%%%%%%
\bibliographystyle{siam}
\bibliography{main}

\end{document}